\theoremstyle{plain}
\newtheorem{theorem}{\bf Theorem}[]
\newtheorem{lemma}[theorem]{Lemma}
\newtheorem{thmnonumber}{\bf Theorem}
\theoremstyle{definition}
\newcommand{\RR}{\mathbb{R}}
\newcommand{\II}{\mathbb{I}}
\newcommand{\cm}[1]{}
\definecolor{mypink}{RGB}{215, 5, 234}
\begin{document}
\author{Karim Adiprasito}

\address{\emph{Karim Alexander Adiprasito}, Sorbonne Université and Université Paris Cité, CNRS, IMJ-PRG, F-75005 Paris, France}
\email{karim.adiprasito@imj-prg.fr}

\author{Bruno Benedetti}

\address{\emph{Bruno Benedetti}, Department of Mathematics, University of Miami, Florida} 
\email{bruno@math.miami.edu}

\title{Sparse handlebody decompositions \\and non-finiteness of~$g_3=0$}
\maketitle

\vskip6mm

\begin{abstract}

We prove 
 that a PL manifold admits a handle decomposition into handles of index $\le k$ if and only if $M$ is $k$-stacked, i.e., it admits a PL triangulation in which all $(d-k-1)$-faces are on $\partial M$. We use this to solve a problem posed in 2008 by Kalai: In any dimension higher than four, there are infinitely many homology-spheres with $g_3 =0$.
\end{abstract}

\phantom{a}
\vskip6mm

The face vector of a simplicial complex is a vector whose $i$-th entry counts the number of $i$-dimensional faces. Historically, face vectors have been of interest since Descartes' lost-and-found proof of the Euler characteristic of polytope boundaries, from around 1630. Surprisingly, most of the progress on the topic happened in a single decade: At the beginning of the Seventies, McMullen \cite{McMulleng} and Barnette \cite{Barnette} proved the Upper and the Lower Bound Theorem, respectively, for face vectors of polytopes. At the same time, McMullen proposed his $g$-Conjecture, which would provide a complete characterization of $f$-vectors of polytopes  \cite{McMulleng}. The two directions of the conjecture were eventually established in 1980 by Billera--Lee \cite{BLg} and by Stanley \cite{Stanleyg}. 

A lot of progress has been done since then in extending this result to spheres and manifold; see in particular the first author's extension to (homology) spheres of Stanley's theorem \cite{Adiprasitog}, and Murai--Nevo's extension to spheres of the Generalized Lower Bound Theorem \cite{MNg}. 
The latter result is of special relevance for the present paper: Under an algebraic condition now known to hold for all spheres, Murai--Nevo proved that in one of the extremal cases (namely, the case when primitive cohomology vanishes) triangulated spheres can be extended to triangulated disks with no interior faces of low dimension. This lead to an interesting sequence of questions, as it was not clear what manifolds could attain such algebraic condition.

We answer this question by relating it to the handlebody decomposition of the manifold. It is known that if a $d$-manifold $M$ admits a PL triangulation in which all $(d-k-1)$-faces are on $\partial M$, then the manifold admits also a handle decomposition into handles of index $\le k$. Swartz asked whether the converse would hold~\cite[p.~24]{Swa14}, and he was able to provide a positive answer in  a few special cases. The answer turns out to be positive in all cases:

\begin{thmnonumber}\label{thm:handlebody} Let $d \ge 2$ be an integer. Let $M$ be a PL $d$-manifold. The following are equivalent: 
\begin{compactenum}[ \rm (1) ]
\item $M$ admits a PL handle decomposition into handles of index $\le k$, 
\item $M$ admits a PL triangulation in which all $(d-k-1)$-faces are on $\partial M$.
\end{compactenum}
\end{thmnonumber}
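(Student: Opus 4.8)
\noindent\emph{Proof strategy.}

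The whole equivalence is best read through duality. In a triangulation $T$ of $M$, assigning to each interior $i$-face its dual block of dimension $d-i$ shows that ``all $(d-k-1)$-faces lie on $\partial M$'' is literally the same as ``every interior face has dimension $\ge d-k$'', i.e. ``every dual block has dimension $\le k$''. The union of the dual blocks of the interior faces is then a polyhedron $P$ of dimension $\le k$ onto which $M$ deformation retracts, and thickening the blocks of $P$ in order of increasing dimension turns the block dual to an interior $i$-face into a handle of index $d-i\le k$. This is the (folklore) implication (2)$\Rightarrow$(1), and it also pins down what must be engineered for the converse: a triangulation whose interior faces are exactly dual to the cores of the prescribed handles.

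For (1)$\Rightarrow$(2) I would argue by induction on $d$ and, for fixed $d$, on the number of handles. After the usual rearrangement I may assume the handles are attached in order of nondecreasing index, so it suffices to show that if $M_t$ already carries a triangulation with all $(d-k-1)$-faces on $\partial M_t$, then attaching one further handle $H\cong D^{j}\times D^{d-j}$ with $j\le k$ preserves the property. I would triangulate $H$ as the prism $\Delta^{j}\times\Delta^{d-j}$. Here a direct computation is the point: a simplex of the staircase triangulation is a chain in the grid $\{0,\dots,j\}\times\{0,\dots,d-j\}$, and its relative interior meets the open product $\rint\Delta^{j}\times\rint\Delta^{d-j}$ exactly when its vertices' first coordinates exhaust $\{0,\dots,j\}$ and their second coordinates exhaust $\{0,\dots,d-j\}$; such a chain has at least $\max(j+1,d-j+1)$ vertices, so every face of $H$ that becomes interior in $M_{t+1}$ has dimension $\ge\max(j,d-j)\ge d-j\ge d-k$. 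The same estimate on the attaching region $A=\partial\Delta^{j}\times\Delta^{d-j}$ shows that its faces interior to $A$ have dimension $\ge d-k$. Since $A\cong S^{j-1}\times D^{d-j}$ is a $(d-1)$-dimensional handlebody of index $\le j-1\le k-1$, the theorem in dimension $d-1$ (with parameter $k-1$) also produces a triangulation of $A$ with all $(d-k-1)$-faces on $\partial A$, which is precisely the type the prism induces; this compatibility is what keeps the induction on $d$ consistent.

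The genuine difficulty is the gluing. The triangulation that $\partial M_t$ induces on the image of the attaching map need not agree with the prism triangulation of $A$, and naive common subdivision is fatal: subdividing a boundary simplex forces subdivisions of the interior simplices meeting it, and these generically create interior faces of dimension $<d-k$, destroying the invariant. I would therefore insert a collar $\partial M_t\times[0,1]$ and reconcile the two triangulations \emph{inside} the collar, interpolating between the ambient triangulation on the inner level and a prism-compatible triangulation of $A$ on the outer level through local moves that never introduce an interior face of dimension below $d-k$. This collar reconciliation is where essentially all the combinatorial work sits, and it is exactly here that the hypothesis $j\le k$ enters, through the bound $\max(j,d-j)\ge d-k$ established above. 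Once a single handle can be attached in this controlled manner, the double induction on $d$ and on the number of handles closes and delivers the triangulation of (2).
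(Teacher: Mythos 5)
Your (2)$\Rightarrow$(1) direction via dual blocks is the folklore implication that the paper also treats as known, and it is fine. In (1)$\Rightarrow$(2) you have correctly diagnosed the obstruction --- naive common subdivision propagates into the interior and creates low-dimensional interior faces --- but your proposed remedy is not a proof: the claim that one can interpolate, inside a collar, between the triangulation induced by $\partial M_t$ on the attaching region and the staircase triangulation of $\partial\Delta^{j}\times\Delta^{d-j}$ ``through local moves that never introduce an interior face of dimension below $d-k$'' is precisely the hard content of the theorem, and you exhibit neither the moves nor an invariant controlling them. Note that this is not a routine matching problem: whatever material you insert in the collar internalizes every face it sweeps past, so for instance a vertex of $\partial M_t$ lying in the open attaching region becomes an interior vertex of the enlarged manifold unless the interpolation is engineered with extreme care; both Pachner moves and common stellar subdivisions fail this test, since a subdivision of a boundary simplex realized inside a collar generically creates interior faces of all dimensions down to $0$. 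Your appeal to the theorem in dimension $d-1$ for $A\cong S^{j-1}\times\II^{d-j}$ is also a non sequitur: it produces \emph{some} triangulation of $A$ with the right property, bearing no relation to either the induced or the prism triangulation, so it cannot anchor the induction. The staircase dimension count, while correct, only controls your model triangulation of the handle; it says nothing about the faces created by the reconciliation itself.

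The paper's proof avoids matching triangulations altogether, and that is its main idea. First, stellar subdivisions of the boundary are realized as simplicial cobordisms (Lemma \ref{lem:stack}): subdividing a boundary face $F$ of dimension $h$ is implemented by stacking a pyramid over the star of $F$ on top of $M$, so that the only faces becoming interior are $F$ and the faces containing it, all of dimension $\ge h$. Second, the handle is never attached along the full tube $A$ with a prescribed triangulation; instead one takes the core $C\cong S^{k-1}$ of $A$, places it transversal to the skeleton, and uses Lemma \ref{lem:stellar} to make its neighborhood $N$ regular by stellarly subdividing only faces of dimension $\ge d-k+1$ --- hence, via Lemma \ref{lem:stack}, without ever creating interior faces of dimension $\le d-k-1$. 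Finally, Lemma \ref{lem:filling} extends the triangulation already present on $N\cong S^{k-1}\times\II^{d-k}$ to a triangulated $d$-disk (the handle) with all faces of dimension $\le d-k-1$ on the boundary, by induction on the thickness of the product with a final coning step. If you wish to salvage your plan, drop the demand that the attaching region carry the staircase triangulation and prove your collar statement only for a regular neighborhood of the core; in doing so you will essentially be rebuilding Lemmas \ref{lem:stellar}--\ref{lem:filling}.
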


As an application of Theorem \ref{thm:handlebody}, we provide a solution to the problem posed by Kalai, after an intuition by Swartz \cite[Problem 3.4]{Swa08}, of whether there are only finitely many homeomorphism types of PL manifolds with bounded $g_k$ and bounded homology. Our work provides a negative answer to this problem, already with the strongest restrictions possible:

\begin{thmnonumber} \label{thm:B} Let $d \ge 6$ be an integer. There are infinitely many homeomorphism types of PL $(d-1)$-manifolds that
\begin{compactenum}[ \rm (1) ]
\item have the same homology of  the $(d-1)$-sphere, and
\item admit a triangulation with $g_3=0$ (or equivalently: are boundary to a $d$-manifold $M$ that has no interior faces of dimension $d-3$).
\end{compactenum}
\end{thmnonumber}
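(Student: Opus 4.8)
The plan is to use Main Theorem \ref{thm:handlebody} to turn the combinatorial constraint $g_3=0$ into a topological one, and then to manufacture infinitely many manifolds meeting it by varying a fundamental group. By the equivalence stated in the theorem, a homology $(d-1)$-sphere $\Sigma$ admits a triangulation with $g_3=0$ exactly when it bounds a $d$-manifold $M$ having no interior $(d-3)$-faces; and by Main Theorem \ref{thm:handlebody}, applied with $k=2$ so that $d-k-1=d-3$, this occurs precisely when $M$ carries a PL handle decomposition using only handles of index $\le 2$. It therefore suffices to produce, for each $d\ge 6$, infinitely many pairwise non-homeomorphic PL homology $(d-1)$-spheres, each of which bounds a $d$-manifold assembled from handles of index $0$, $1$, and $2$ only.

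The natural source is Kervaire's construction of homology spheres. I would fix a finitely presented superperfect group $G$, meaning $H_1(G)=H_2(G)=0$, admitting a balanced presentation $\langle x_1,\dots,x_g \mid r_1,\dots,r_g\rangle$, and form the $d$-manifold $M$ consisting of $D^d$ with $g$ $1$-handles attached (a boundary connected sum of copies of $S^1\times D^{d-1}$) and then $g$ $2$-handles attached along a framed link representing the relators. By construction $M$ collapses to the presentation $2$-complex $K$ of $G$, so $\pi_1(M)=G$ and $H_*(M)=H_*(K)$, and $M$ uses only handles of index $\le 2$. Since $K$ has a single vertex, $g$ edges and $g$ $2$-cells, one computes $\chi(K)=1$; combined with $H_1(K)=G^{\mathrm{ab}}=0$ and the freeness of $H_2(K)\subseteq C_2(K)$, this forces $H_2(K)=0$, so $M$ is an acyclic homology ball. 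Lefschetz duality then yields that $\partial M=:\Sigma$ is a homology $(d-1)$-sphere, and for $d\ge 6$ the inclusion $\Sigma\hookrightarrow M$ induces an isomorphism on $\pi_1$, giving $\pi_1(\Sigma)=G$.

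To obtain infinitely many homeomorphism types I would distinguish the spheres by their fundamental group. Let $Q=\mathrm{SL}(2,5)$ be the binary icosahedral group: it is perfect with trivial Schur multiplier, hence superperfect, and admits a balanced presentation. Because $H_1$ and $H_2$ are additive over free products, the $n$-fold free power $G_n=Q\ast\cdots\ast Q$ is again superperfect, and concatenating balanced presentations gives a balanced presentation of $G_n$ whose $2$-complex is a wedge of acyclic complexes, hence acyclic. The groups $G_n$ are pairwise non-isomorphic, since by Grushko's theorem the number of freely indecomposable free factors is an invariant and $Q$ is a nontrivial finite (hence freely indecomposable) group. Consequently the homology spheres $\Sigma_n$ with $\pi_1(\Sigma_n)=G_n$ are pairwise non-homeomorphic; each has the homology of $S^{d-1}$ and bounds the index-$\le 2$ handlebody $M_n$, so by the reduction above each $\Sigma_n$ admits a triangulation with $g_3=0$.

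The main point to secure is the transition from the handle structure of $M$ to the two boundary statements in the PL category. The identity $H_*(M)=H_*(\mathrm{pt})$, hence that $\partial M$ is a homology sphere, is formal once $K$ is shown acyclic; the delicate step is the identification $\pi_1(\partial M)\cong\pi_1(M)$, which I would obtain by general position: a nullhomotopy (of dimension $2$) of a boundary loop can be pushed off the $2$-dimensional spine of $M$ into a collar of $\partial M$ precisely because $2+2<d$ for $d\ge 5$, and similarly for surjectivity. This, together with Kervaire's realization requiring $\dim\Sigma=d-1\ge 5$, is where the hypothesis $d\ge 6$ enters. The residual verifications — that the $2$-handles can be framed without altering $H_*(M)$, that $M$ is orientable since $H_1(M)=0$, and that the free powers $G_n$ are genuinely distinct — are routine and do not affect the outline.
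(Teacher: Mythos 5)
Your proposal is correct, and its overall architecture coincides with the paper's: reduce via Murai--Nevo and Main Theorem \ref{thm:handlebody} (with $k=2$, so $d-k-1=d-3$) to producing infinitely many homology $(d-1)$-spheres bounding $d$-manifolds built from handles of index $\le 2$, then realize these as thickenings of balanced presentation complexes of perfect groups and distinguish the boundaries by $\pi_1$. Where you genuinely diverge is in the key group-theoretic ingredient, Lemma \ref{lem:infinite}. The paper obtains infinitely many balanced-presented perfect groups by applying Hausmann's trick to the direct powers $A_5^{\times n}$ --- this is necessary there because $A_5$ has nontrivial Schur multiplier, so $A_5^{\times n}$ itself admits no balanced presentation (as your own Euler-characteristic argument shows, a balanced-presented perfect group is automatically superperfect). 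You instead start from the superperfect binary icosahedral group $\mathrm{SL}(2,5)$, which honestly carries a balanced presentation, take free powers $G_n = \mathrm{SL}(2,5)^{\ast n}$ (balanced presentations concatenate; $H_1$ and $H_2$ are additive over free products), and separate the $G_n$ by Grushko's theorem. This is more self-contained: it removes the black-box appeal to \cite{Hau76, Hau78}, at the modest cost of invoking Grushko and of your groups being infinite rather than finite. Your remaining differences are cosmetic: you build $M$ by explicit Kervaire-style handle attachment to $D^d$ rather than as a regular neighborhood of the presentation complex in $\RR^d$ (these are the same thickening), and you spell out what the paper compresses --- the acyclicity of the presentation complex ($\chi=1$, $H_1=0$, $H_2$ free), Lefschetz duality for the homology-sphere claim, and the general-position identification $\pi_1(\partial M)\cong\pi_1(M)$ from $2+2<d$, which the paper cites as a ``classical general-position principle.'' One small imprecision, harmless to the result: your codimension counts only require $d\ge 5$, not $d\ge 6$, so the hypothesis $d\ge 6$ is not forced by the steps you attribute it to; the theorem as stated claims only $d\ge 6$, so nothing breaks.
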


The main ingredient of our proof of Theorem \ref{thm:handlebody} is the \emph{interpretation of stellar subdivisions as simplicial cobordisms}. More specifically, let $T$
be the $(d-1)$-dimensional boundary of some triangulated $d$-manifold $M$, and suppose $T'$ is obtained from $T$ via a stellar subdivision of a single face $F$. We view $T'$ as the boundary of a $d$-manifold obtained by attaching on top of $M$ a cone over the star of $F$. The Figure below illustrates this for $d=3$. 

\begin{figure}[htbp]
\begin{center}
\includegraphics[scale=0.24]{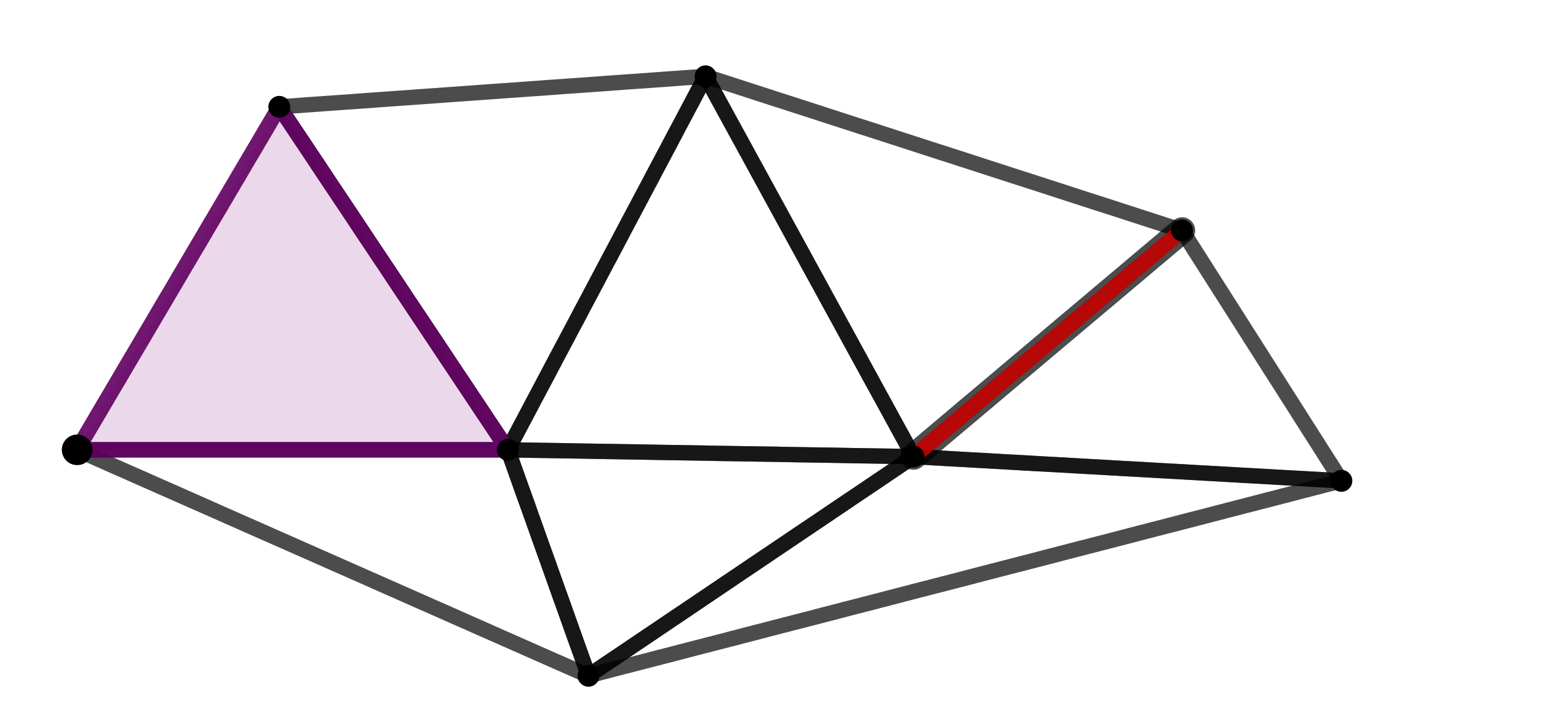} $\qquad$
\includegraphics[scale=0.24]{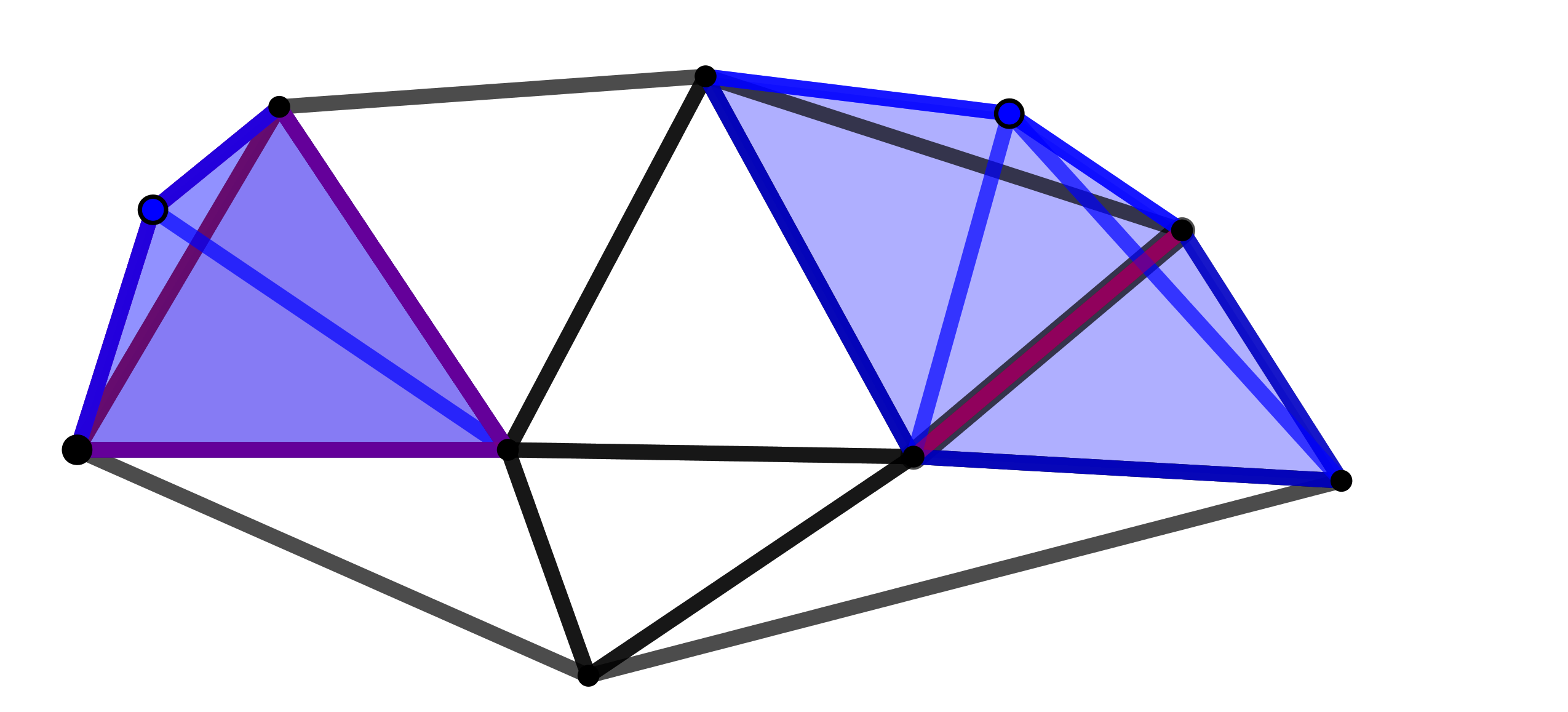}
\label{fig:ed}
\caption{To subdivide stellarly the boundary of some $3$-manifold, at a triangle (in pink) and at an edge (in red), 
we stack at the pink triangle, and we build a pyramid (consisting of two tetrahedra) on top of the star of the red edge. The boundary of the new $3$-manifold is the stellar subdivision we desired. The pink triangle, the red edge, and the two triangles containing the red edge, are now interior. No vertex of the original $3$-manifold has become interior.}
\end{center}
\end{figure}

This idea also has a long history: Smooth cobordisms are of course a standard tool in topology, and the similar idea of seeing a flip as an elementary cobordism seems folklore. See e.g. Newman \cite{Newman}. 

\medskip
\textbf{Acknowledgments} Karim Adiprasito was supported by the European Research Council under the European Unions Seventh Framework Programme ERC Grant agreement ERC StG 716424 - CASe, a DFF grant 0135-00258B, the Israel Science Foundation under ISF Grant 1050/16,  the Horizon Europe ERC Grant number: 101045750 / Project acronym: HodgeGeoComb, and the Centre National de Recherche Scientifique. Bruno Benedetti was supported by NSF Grant 1855165, Simons Award MPS-TSM-00002873, and wishes to thank Ed Swartz for many helpful conversations.
\bigskip

\enlargethispage{3mm}

\section{Proof of Theorem  \ref{thm:handlebody} }
Consider a $(d-1)$-dimensional polytopal complex $N$ (for example, the boundary of a $d$-manifold). Let $T$ be a PL triangulation of $N$. Let $C$ be a polyhedral complex mapped into $T$ along a PL map. {  By \emph{neighborhood of (the PL embedding of) $C$ inside $N$} we mean the closure of the faces of $T$ intersecting $C$}; any such neighborhood is \emph{regular} if it strongly PL  deformation retracts onto $C$. One says that $C$ is \emph{transversal} to $T$ if any $a$-face of $C$ and any $b$-face of $T$ are either disjoint, or intersect in a finite polyhedral complex of dimension $a+b- \dim T$.  {  For example, you may think of a polygonal curve $C$ inside a triangulated torus $T$; the polygonal curve is transversal to $T$ if any edge of $C$ intersects any edge of the torus in finitely many points, and in addition, all vertices of $C$ are disjoint from the $1$-skeleton of $T$.}
It is known from classical PL topology that (i) one can perturb the embedding of $C$ into $N$ so that it is transversal to $T$, and (ii) up to refining $T$ via (stellar) subdivisions, one can assume that the neighborhood of $C$ inside $T$ is regular. 
Our first goal is to strengthen (ii): namely, we want to show that one such subdivision is achievable by stellarly subdividing only faces of large dimension.

\begin{lemma} \label{lem:stellar} Let {  $1 \le k \le d$ be integers}. Let $C$ be any  $(k-1)$-complex  PL-embedded inside a $(d-1)$-complex $N$. Let $T$ be a PL triangulation of $N$ such that $C$ is transversal to $T$. There is a stellar subdivision $T'$ of $T$ 
such that 
\begin{compactenum}[ \rm (1) ]
\item the neighborhood of $C$ inside $T'$ is regular, and $C$ is transversal to {  $T'$}; 
\item $T'$ is obtained from $T$ by stellarly subdividing only faces of dimension $\ge d-k$. 
\end{compactenum}
\end{lemma}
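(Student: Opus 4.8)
The plan is to read off from the transversality hypothesis a dual ``normal slice'' structure for $C$, and then to realize a regular neighborhood of $C$ as the union $N(C,T')$ of all closed faces of $T'$ meeting $C$, obtaining the required thinning by starring only high-dimensional faces. First I would record what transversality buys us. Since $\dim C = k-1$ and $\dim \partial M = d-1$, the submanifold $C$ has codimension $d-k$; transversality to the skeleta of $T$ forces $C$ to be disjoint from every face of dimension $\le d-k-1$, to meet each $(d-k)$-face in a finite set of relative-interior points, and to meet each $j$-face $\sigma$ with $j \ge d-k+1$ in a properly PL-embedded submanifold $C\cap\sigma$ of dimension $j-(d-k)$, transverse to $\partial\sigma$ with boundary $C\cap\partial\sigma$. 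Thus the $(d-k)$-faces crossed by $C$ play the role of transverse normal slices: each meets $C$ in a single interior point and is a $(d-k)$-disk complementary to $C$ there, and these slices are exactly the fibre directions of the $(d-k)$-block bundle over $C$ that a regular neighborhood must be. The target is therefore a subdivision in which $N(C,T')$ is a manifold neighborhood that collapses to $C$, with the preserved $(d-k)$-faces supplying those fibre directions.

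The dimension restriction in (2) is then both forced by and compatible with this picture: $C$ meets no face of dimension $\le d-k-1$, and meets each $(d-k)$-face only at an interior point, so there is no reason to disturb the $(d-k)$-skeleton, and one must only refine the faces that $C$ crosses in positive dimension. Concretely, I would star faces of dimension $\ge d-k+1$ in order of decreasing dimension, placing each new vertex at a point of $C$ in the relevant $\rint\sigma$ and iterating, so as to \emph{capture} each piece $C\cap\sigma$ inside small simplices; the closed faces meeting $C$ then assemble into a thin band around $C$. Because every starring is performed at a face of dimension $\ge d-k+1$, and starring a face alters only its closed star (leaving all proper faces, hence the whole $(d-k)$-skeleton, intact), no face of dimension $\le d-k$ is ever subdivided. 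In particular the normal slices survive; they reach out to the old vertices of $T$ as collapsible spikes, which enlarge $N(C,T')$ but do not spoil regularity, since free collapses retract each spike back onto its crossing point.

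If one reads ``stellar'' strictly as starring at barycenters, the $C$-guided placement above can instead be obtained after the fact: the guided PL subdivision and $T$ agree on the $(d-k)$-skeleton, so by the relative form of the Alexander--Newman stellar theory they admit a common stellar subdivision $T'$ rel that skeleton, all of whose starrings again occur at faces of dimension $\ge d-k+1$, giving (2); one then checks that $N(C,T')$ remains a manifold neighborhood collapsing to $C$, giving (1). Either way, the hard part is the interface between the rigidity of stellar moves and the dimension control. A stellar move cones the \emph{entire} star of a face, so straightening a positive-dimensional $C\cap\sigma$ demands a carefully ordered cascade of starrings whose new vertices track $C$, and the delicate verification is not the local cone geometry but the behaviour at the \emph{preserved} low faces: where several normal slices might a priori crowd together, one must ensure that $N(C,T')$ is a genuine PL manifold rather than merely a neighborhood. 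Separating these sheets using transversality, and checking that the resulting band collapses globally onto $C$, is where the real work lies; the uniqueness of regular neighborhoods is what finally glues the local pictures into the single regular neighborhood required.
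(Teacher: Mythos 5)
Your proposal correctly extracts the consequences of transversality and correctly identifies the allowed moves, but it leaves unproved exactly the claim that constitutes the lemma: that after the guided starrings the simplicial neighborhood $N(C,T')$ is a PL manifold neighborhood PL homeomorphic to $C\times \II^{d-k}$. You defer this yourself (``where the real work lies''), and the local model you offer in its place is not right. Since a $(d-k)$-face $\tau$ crossed by $C$ may never be subdivided, its crossing point stays in $\rint\tau$; hence every closed face of $T'$ containing $\tau$ meets $C$, and $N(C,T')$ contains the \emph{entire closed star} of $\tau$ in $T'$ --- full $(d-1)$-dimensional material reaching all the way to the old vertices of $\tau$, not a ``collapsible spike'' of dimension $d-k$. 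At such an old vertex several closed stars of crossed faces can meet, and whether their union is locally a product is precisely the point at issue. Moreover, the regularity demanded by (1) is the product structure $C\times\II^{d-k}$ for the neighborhood \emph{as defined} (closure of the faces meeting $C$), so one cannot retract spikes away or enlarge to a more convenient neighborhood and then appeal to uniqueness of regular neighborhoods: uniqueness applies only after one knows $N(C,T')$ is a compact manifold containing $C$ appropriately, and ``manifold'' is the unverified step; a free collapse of $N(C,T')$ onto $C$ would not by itself certify it. Your fallback via a relative Alexander--Newman common stellar subdivision rel the $(d-k)$-skeleton, with all starrings at faces of dimension $\ge d-k+1$, is a further unproved ingredient, and in any case it again only rearranges the subdivision; it does not address the manifold/product verification.

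The paper fills exactly this hole by an induction on the dimension $d$. It first uses the allowed starrings (faces of dimension $\ge d-k+1$ only) until $C$ is, up to ambient PL isotopy, a subcomplex of the subdivision $\widetilde T$ --- this matches the first half of your cascade --- and then proves, star by star, that each $\mathrm{st}_\sigma \widetilde T$ meets $C$ in a disk: one radially projects $C\cap\mathrm{st}_\sigma\widetilde T$ from a point $p\in\rint\sigma$ onto $\partial\,\mathrm{st}_\sigma\widetilde T$, notes the projection is injective outside $\mathrm{st}_\sigma C$ (and stays so after a general-position perturbation of $C$ there), and applies the inductive hypothesis in the $(d-2)$-dimensional complex $\partial\,\mathrm{st}_\sigma\widetilde T$, where the projected copy of $C$ misses the $(d-k-1)$-skeleton, so that the allowed starrings in the link establish the regular-neighborhood property inside $\mathrm{st}_\sigma\widetilde T$; the same dimension count yields (2). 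Some inductive local analysis of this kind (disks in stars, or equivalently control of links along $C$) --- or a genuine substitute for it --- is what your argument needs to pass from ``thin band of faces meeting $C$'' to ``regular neighborhood,'' and without it the proposal is a plausible plan rather than a proof.
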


\begin{proof} Let us start with a simple observation: If $P$ is any 
 { PL $(d-1)$-ball},
and $A, B$ are disjoint polytopes inside it so that   no facet of $\partial P$ intersects both $A$ and $B$,
then   $A$ and $B$ can be separated using stellar subdivisions at maximal faces. Indeed, a single such subdivision suffices up to PL homeomorphism.
Next, we notice that the observation above extends to the case when $A, B$ are arbitrary polytopal complexes: indeed, we only have to apply this to all pairs of faces $\alpha$ of $A$ and $\beta$ of $B$ pairwise.
{Now we are ready to prove the Lemma by induction on $d$, the cases $d=1$ and $d=2$ being trivial. By the inductive assumption, the Lemma holds for the $(d-2)$-skeleton of $T$ and the restriction of $C$ to it. For any vertex $v$ of $T$, let $B_v$ be the boundary of the star of $v$ in $T$ and let $R_C$ be the restriction of $C$ to $B_v$. Since $P_v$ has dimension $d-2$ and $R_C$ has dimension at most $k-2$ (by transversality), by inductive assumption we know that the neighborhood inside $B_v$ of the restriction  becomes regular possibly after some stellar subdivisions at faces of dimension $\ge (d-1) - (k-1)$.} This completes the proof.
\end{proof}

Our next Lemma is an interpretation of stellar subdivisions as cobordisms (cf. Figure 1):

\begin{lemma}\label{lem:stack} Let $d \ge 2$ be an integer. Let $S$ be a PL triangulation of a $d$-manifold $M$. Let $T = \partial S$. Let $T'$ be a triangulation obtained from $T$ by stellarly subdividing a single $h$-dimensional face $F$ of $T$. Then there is a PL triangulation $S'$ of $M$ such that $T' = \partial S'$, and the minimal dimension of an interior face of $S'$ is either $h$ or the minimal dimension of an interior face of $S$.
\end{lemma}

\begin{proof}
Let $v$ be a point not in $S$. Let $A$ be the star of $F$ in $T$. Let  $B$ be the cone over $A$ with apex $v$. Let $S'$ be the triangulation obtained by glueing $B$ to $S$ alongside $A$.   Note that $\partial S'= T'$.
When passing from $S$ to $S'$, the face $F$ and all faces containing it have become interior. All other faces of $T=\partial S$ are also present in $T' = \partial S'$.
\end{proof}

For the incoming two proofs we need some additional notation. We denote by $\II$ the unit interval $[0, 1] \subset \RR$. 
If  $\varphi: S^i \times \II^j \longrightarrow A$ is a PL homeomorphism and $\mathbf{m}$ is any point in $\II^j$, the image of the restriction of $\varphi$ to $S^i \times \mathbf{m}$ is called a \emph{core} of $A$. In the literature usually $\mathbf{m}$ is chosen in the relative interior of $\II^j$; but in this paper, we allow the core to touch the boundary. 

\begin{lemma} \label{lem:filling} Let $i ,j$ be nonnegative integers.
Any PL triangulation $A$ of $S^{i} \times \II^j$ in which every face of dimension $\le j-1$ is on the boundary can be extended to a PL triangulation of  the disk of dimension $i+j+1$ in which every face of dimension  { $\le j-1$} is on the boundary.
\end{lemma}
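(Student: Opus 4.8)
The plan is to read the disk of dimension $i+j+1$ as a handle. Since $S^i \times \II^j = \partial D^{i+1} \times \II^j$ sits in the boundary of $D^{i+1} \times \II^j$, and the latter is a ball of dimension exactly $(i+1)+j = i+j+1$, the statement says precisely that a triangulation of the attaching tube $S^i\times\II^j$ of an index-$(i+1)$ handle, whose $(j-1)$-skeleton already lies on the boundary, extends over the whole handle $D^{i+1}\times\II^j$ while keeping the $(j-1)$-skeleton on the boundary. For $d=i+j+1$ the condition ``faces of dimension $\le j-1$ on the boundary'' is the index-$(i+1)$ stacked condition, which is exactly what one wants to impose on each handle separately.

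\textbf{Base case and engine.} I would first settle $i=0$. Then $S^0\times\II^j$ is a disjoint union of two $j$-balls, and the hypothesis forces each of them to be a single $j$-simplex, because a triangulated $j$-ball with no interior $(j-1)$-face is a single simplex (a ball is strongly connected through its interior $(j-1)$-faces). I then fill the prism $D^1\times\II^j$ between the two simplices $\sigma_0,\sigma_1$ by the staircase triangulation of $\Delta^j\times\II$. A direct check shows that a face of this prism is interior if and only if it contains a vertex over each of the $j+1$ coordinate directions and also meets both ends, hence has at least $j+1$ vertices; so every interior face has dimension $\ge j$, which is exactly what is required. This staircase is the combinatorial engine I want to run in the general case.

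\textbf{General strategy.} I would write $D^{i+1} = D^i\times\II$, turning the handle into the prism $L=(D^i\times\II^j)\times\II$ over the ball $D^{i+j}=D^i\times\II^j$. The given triangulation then restricts to triangulations $E_0,E_1$ of the two ends $D^{i+j}\times\{0,1\}$, together with a triangulation of a sub-tube $(S^{i-1}\times\II^j)\times\II$ of the side; a short computation with $\partial K = S^i\times\partial\II^j$ shows that each $E_\ell$ again has its $(j-1)$-skeleton on its boundary (indeed inside the $D^i\times\partial\II^j$ part). Filling $L$ then amounts to building a simplicial cobordism from $E_0$ to $E_1$ inside $D^{i+j}\times\II$, which is precisely the setting of the paper's central device. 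I would realize the cobordism as a sequence of stellar subdivisions, each interpreted through Lemma \ref{lem:stack} as attaching a layer that turns a boundary face and all faces containing it into interior faces. If every subdivision used is performed on a face of dimension $\ge j$, then by Lemma \ref{lem:stack} every newly interior face has dimension $\ge j$, the $(j-1)$-skeleton is never subdivided and never leaves the boundary, and the resulting $L$ has the required property.

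\textbf{The hard part.} The genuine obstacle is to connect the two prescribed ends $E_0$ and $E_1$ (two triangulations of the same ball $D^{i+j}$, each with its $(j-1)$-skeleton on the boundary and matching along the prescribed side tube) by stellar moves that only ever act on faces of dimension $\ge j$. Any two triangulations of a PL ball admit a common stellar subdivision, but the difficulty is to carry out such an interpolation without ever subdividing a low-dimensional face, i.e.\ keeping the whole $(j-1)$-skeleton rigid throughout the cobordism. I expect to handle this by placing the relevant subcomplexes in general position and extracting regular neighborhoods via Lemma \ref{lem:stellar}, whose subdivisions are of high-dimensional faces only, and then inducting to push the entire discrepancy between $E_0$ and $E_1$ into dimensions $\ge j$. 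The base case $i=0$ above, where the ends are forced to be single simplices and the staircase performs the interpolation in one stroke, is the prototype for this induction.
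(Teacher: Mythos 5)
Your outline correctly identifies where the difficulty lies, but it does not resolve it: the entire content of the lemma is deferred to your ``hard part'' and left as a hope. To run your prism strategy you must join two essentially arbitrary triangulations $E_0$, $E_1$ of the ball $D^i\times\II^j$ by a simplicial cobordism whose layers, in the sense of Lemma~\ref{lem:stack}, only ever make faces of dimension $\ge j$ interior. No available tool delivers this: Alexander's theorem connects any two triangulations of a PL ball by stellar moves or a common subdivision, but with no control on the dimension of the faces subdivided, and Lemma~\ref{lem:stellar} only produces a regular neighborhood of a single fixed embedded manifold --- it is not an interpolation device between two prescribed triangulations with a frozen $(j-1)$-skeleton. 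Keeping the low skeleton rigid throughout such a cobordism is exactly as hard as the lemma you are proving, so ``I expect to handle this by general position and induction'' is a plan, not a proof. There is also an unjustified step earlier: an arbitrary PL triangulation of $S^i\times\II^j$ does \emph{not} restrict to subcomplexes $E_0$, $E_1$ and a side tube, since the decomposition $S^i=(S^{i-1}\times\II)\cup(D^i\times\{0,1\})$ is merely topological; arranging it simplicially requires modifications that must themselves respect the skeleton condition and must not subdivide the given complex $A$ (the lemma has to \emph{extend} the given triangulation, because in the main theorem the tube sits as a subcomplex of $\partial M$ and cannot be altered).

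The paper avoids any such interpolation by inducting on $j$ rather than on $i$. One pushes the core $C\cong S^i$ of $A$ into $\partial A$ (which has dimension $i+j-1$), arranges that $C$ misses the $(j-2)$-skeleton, and applies Lemma~\ref{lem:stellar}, realized geometrically via Lemma~\ref{lem:stack}, to make the neighborhood $N$ of $C$ regular while only faces of dimension $\ge j-1$ become interior. Then $N\cong S^i\times\II^{j-1}$ satisfies the inductive hypothesis one step down in $j$, so it fills to an $(i+j)$-disk $D$; the union $A\cup D$ is itself an $(i+j)$-disk, and coning over it (the apex lands on the boundary) yields the $(i+j+1)$-ball with interior faces of dimension $\ge j$. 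Only one core ever needs a regular neighborhood, which is precisely what Lemma~\ref{lem:stellar} provides --- no matching of two prescribed ends is ever required. Your base case $i=0$ is correct and well argued (strong connectivity forces each component to be a single simplex, and the staircase triangulation of $\Delta^j\times\II$ indeed has all interior faces of dimension $\ge j$), but it does not seed your induction: for $i\ge 1$ the ends $E_0$, $E_1$ are not forced into any standard form, and the interpolation you would need remains unproved.
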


\begin{proof} For $j=0$, any PL triangulation of $S^i$ can be extended to a {  PL} triangulation of the $(i+1)$-disk, by means of coning. 
So let us focus on $j \ge 1$. 
Let $C$ be the core of~$A$. This  $C$ is not a subcomplex, but by pushing it to the boundary of $A$ we can view it as a PL-embedded $i$-sphere inside $\partial A$, which has dimension $i+j-1$. 
We may assume that $C$ {  is transversal to $\partial A$; this implies that $C$ cannot intersect faces of $\partial A$ that have dimension $\le j-2$}.  
As in Lemma~\ref{lem:stellar}, let us perform  stellar subdivisions of the faces of $\partial A$ intersected by $C$, until the neighborhood $N$ of $C$ inside the triangulation of $\partial A$ is regular. Each one of these subdivisions can be realized geometrically as in Lemma \ref{lem:stack}. Since these subdivisions are at faces of dimension $\ge j-1$, in the process described by  Lemma \ref{lem:stack} only faces of dimension $\ge j-1$ become interior, and thus the resulting triangulation of $N$ will have all faces of dimension $\le j-2$ on the boundary. But $N$ is PL homeomorphic to $S^{i} \times \II^{j-1}$. By induction, the triangulation of $N$ can be extended to a PL triangulation $D$ of the disk of dimension $i+j$ where all faces of dimension $\le j-2$ are on the boundary. Moreover, $A \cup D$ is itself an $(i+j)$-disk. If we cone over $A \cup D$, we obtained the desired $(i+j+1)$-ball.
\end{proof}

\begin{theorem} Let $M$ be a PL $d$-manifold. If $M$ has  a handle decomposition into PL handles of index $\le k$, then $M$ admits a PL triangulation with no interior faces of dimension $\le d-k-1$. 
\end{theorem}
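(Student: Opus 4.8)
The plan is to induct on the number of handles in the decomposition, building a compatible PL triangulation one handle at a time while controlling the dimensions of the faces that become interior. Write $M_0 \subset M_1 \subset \cdots \subset M_n = M$, where $M_0$ is the union of the index-$0$ handles (which we may assume are attached first, since attaching an index-$0$ handle merely adds a disjoint ball and commutes with everything else), and $M_j = M_{j-1}\cup H_j$ is obtained by attaching a single handle $H_j$ of index $1\le i_j \le k$ along a map $\phi_j\colon S^{i_j-1}\times D^{d-i_j}\to \partial M_{j-1}$. For the base case I triangulate each index-$0$ handle, a $d$-ball, as a single $d$-simplex: the only interior face is then the top-dimensional one, so there are no interior faces of dimension $\le d-1$, hence none of dimension $\le d-k-1$. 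The inductive hypothesis is that $M_{j-1}$ carries a PL triangulation $S$ with no interior faces of dimension $\le d-k-1$, and I must produce such a triangulation of $M_j$.

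For the inductive step fix a handle $H$ of index $i := i_j \le k$, with attaching sphere $C := \phi_j(S^{i-1}\times\{0\})$, a PL-embedded $(i-1)$-sphere in $\partial M_{j-1}$. After a general-position isotopy I may assume $C$ is transversal to the $(d-1)$-skeleton of $T := \partial S$. I apply Lemma~\ref{lem:stellar} with $i$ in place of $k$: it yields a stellar subdivision $T'$ of $T$, subdividing only faces of dimension $\ge d-i+1$, such that the neighborhood $N$ of $C$ is regular, i.e. PL homeomorphic to $C\times D^{d-i}=S^{i-1}\times D^{d-i}$; moreover, as $C$ avoids the $(d-i-1)$-skeleton, all faces of $N$ of dimension $\le d-i-1$ lie on $\partial N$. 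By uniqueness of regular neighborhoods I may take $N$ to be the attaching region $\phi_j(S^{i-1}\times D^{d-i})$, carrying the given framing. Each of these stellar subdivisions is realized geometrically on the filling $d$-manifold by Lemma~\ref{lem:stack}, and by that lemma the only faces that become interior are the subdivided faces and the faces containing them, all of dimension $\ge d-i+1 \ge d-k+1 > d-k-1$. Hence the updated triangulation of $M_{j-1}$ still has no interior faces of dimension $\le d-k-1$.

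Now $N\cong S^{i-1}\times\II^{d-i}$ is a triangulated subcomplex of $\partial M_{j-1}$ with all faces of dimension $\le d-i-1$ on its boundary, so Lemma~\ref{lem:filling} (with $p=i-1$ and $q=d-i$, so that $p+q+1=d$) extends it to a PL triangulation of a $d$-ball $B\supset N$ in which all faces of dimension $\le d-i-1$ lie on $\partial B$. I glue $B$ to $M_{j-1}$ along $N$ to obtain the triangulation of $M_j$. Its interior faces are of three kinds: those inherited from $M_{j-1}$ (dimension $\ge d-k$, by induction and the previous paragraph); the relative-interior faces of $N$, which now sit between $M_{j-1}$ and $B$ (dimension $\ge d-i \ge d-k$); and the interior faces of $B$ (dimension $\ge d-i \ge d-k$). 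All have dimension $\ge d-k > d-k-1$, so $M_j$ has no interior faces of dimension $\le d-k-1$, completing the induction once we know $M_{j-1}\cup_N B$ is the correct manifold $M_j$.

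That last identification is the step I expect to be the main obstacle. One must verify that filling $N$ with $B$ realizes the given handle attachment, i.e. that $M_{j-1}\cup_N B$ is PL homeomorphic to $M_{j-1}\cup_{\phi_j} H$, which amounts to showing $(B,N)$ is PL homeomorphic rel $N$ to $(H,N)=(D^i\times D^{d-i},\,S^{i-1}\times D^{d-i})$. I would argue this from the cone structure produced inside the proof of Lemma~\ref{lem:filling}: $B$ is a cone over $\partial B=N\cup\overline{\partial B\setminus N}$, the handle $H$ is likewise, up to PL homeomorphism, a cone over $\partial H=N\cup(D^i\times S^{d-i-1})$, and the two complementary regions are regular neighborhoods of the core carrying the same framing. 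Uniqueness of regular neighborhoods then gives a PL homeomorphism $\partial B\to\partial H$ fixing $N$, which cones to the desired homeomorphism $B\to H$ rel $N$. This reduces the whole statement to standard regular-neighborhood and general-position facts in PL topology, which is exactly why the three preparatory lemmas carry the real content.
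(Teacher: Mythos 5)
Your proposal is correct and follows essentially the same route as the paper: induction on the number of handles with $0$-handles triangulated as disjoint $d$-simplices, Lemma~\ref{lem:stellar} (realized geometrically via Lemma~\ref{lem:stack}) to make the attaching region a regular neighborhood while only faces of dimension $\ge d-k$ become interior, and Lemma~\ref{lem:filling} to fill that neighborhood with a triangulated $d$-disk with no low-dimensional interior faces. Your final paragraph, verifying that gluing the filling disk along $N$ actually realizes the given handle attachment, addresses a point the paper leaves implicit, and is a welcome addition rather than a divergence.
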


\begin{proof} Up to rearranging the handles, one may assume that lower-index handles are attached first \cite[Section 4]{Milnorh}. The idea is to use a double induction, on the dimension $d$ and on the number of handles. Also, handles of index $0$ 
are a trivial case: A manifold with a PL handle decomposition consisting of only $0$-handles is  a disjoint union of PL $d$-balls. As such, it clearly admits a triangulation without interior $(d-1)$-faces: Namely, a disjoint union of $d$-simplices.

From now on, we assume $k \ge 1$. Suppose that we wish to attach a $k$-handle to a manifold $M$ that has a handle decomposition into PL handles of index $\le k-1$. The attachment is alongside some submanifold $A$ of $\partial M$ that is PL homeomorphic to $S^{k-1} \times  \II^{d-k}$. Let $C$ be the core of~$A$. Note that $C$ is not a subcomplex of $M$, but it is  a PL-embedded $(k-1)$-sphere inside $\partial M$. 
We may assume that $C$ is transversal to $\partial M$, which implies that $C$ may only intersect the faces of $\partial M$ that have dimension $\ge d-k$.  Now let us perform  stellar subdivisions of the faces intersected by $C$, as in Lemma \ref{lem:stellar}, to make sure that the neighborhood of $C$ inside $\partial M$ is regular. Each one of these subdivisions can be realized geometrically as in Lemma \ref{lem:stack}. Since in the process only faces of dimension $\ge d-k$ become interior, the final triangulation will have no interior faces of dimension $\le d-k-1$.
Now we are ready to perform the attachment. Let $N$ be the neighborhood of $C$ inside $\partial M$. Being regular, $N$ is PL homeomorphic to $S^{k-1} \times  \II^{d-k}$. 
 By construction, $N$ has all faces  of dimension $\le d-k-1$ on the boundary. So we can apply Lemma \ref{lem:filling}, with $i=k-1$ and $j=d-k$, and expand $N$ to a triangulation of the $d$-disk in which every face of dimension { $\le d-k-1$} is on the boundary. 
This way we have dealt  with the first $k$-handle; but the exact same argument above can be replicated also if we wish to attach a $k$-handle to a manifold that has a handle decomposition into PL handles of index $\le k$. 
 \qedhere
\end{proof}

\bigskip 

\section{Proof of Theorem \ref{thm:B}}

Recall that in abstract algebra, 
a group is called \emph{perfect} if its abelianization is trivial. 
All  non-Abelian simple groups are perfect. The converse is false, as shown by $A_5 \times A_5$: Any product of two perfect groups is perfect, but any product of two non-Abelian simple  groups is not simple. 

\begin{lemma} \label{lem:infinite}
There are infinitely many distinct balanced-presented perfect groups.
\end{lemma}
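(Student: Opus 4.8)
The plan is to exhibit one explicit infinite family of groups and to verify the three required properties — perfect, balanced, and pairwise non-isomorphic — one at a time. The family I would use is the special linear groups $G_p := \mathrm{SL}(2,\mathbb{F}_p)$ over the prime fields, one for each prime $p \ge 5$; each is finite, of order $p(p^2-1)$.

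First I would check perfectness. For $p \ge 5$ the quotient $\mathrm{PSL}(2,\mathbb{F}_p) = G_p/\{\pm \mathrm{Id}\}$ is a non-abelian finite simple group, hence perfect; and $G_p$ itself is perfect because over a field with more than three elements every transvection is a commutator, and the transvections generate $\mathrm{SL}(2,-)$, so $G_p$ equals its own commutator subgroup. (The solvable small cases $p = 2,3$ are exactly the ones this argument excludes.) I would also record here that $G_p$ is the universal central extension of the simple group $\mathrm{PSL}(2,\mathbb{F}_p)$, so it is in fact superperfect, $H_1(G_p) = H_2(G_p) = 0$; this is more than the lemma requires, but it is precisely the input needed downstream to realize each $G_p$ as the fundamental group of a homology sphere.

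Pairwise non-isomorphism then costs nothing: the orders $p(p^2-1)$ are distinct for distinct primes, so no two members of the family are isomorphic, and since there are infinitely many primes $p \ge 5$ the list $\{G_p\}$ contains infinitely many isomorphism types.

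The one step carrying real content — and the step I expect to be the main obstacle — is producing, for each $p$, a balanced presentation of $G_p$, that is, a finite presentation with as many relators as generators. Here I would appeal to the classical deficiency-zero presentations of $\mathrm{SL}(2,\mathbb{F}_p)$ due to Sunday and to Campbell--Robertson, each of which presents $G_p$ on two generators and two relators; combined with the perfectness established above, this is exactly the assertion of the lemma. A self-contained alternative is to write such a two-generator two-relator presentation explicitly and confirm it is balanced and perfect by checking that its abelianized relator matrix is unimodular; the difficulty then migrates to the genuinely delicate point of proving that the presented group has the full order $p(p^2-1)$, rather than collapsing onto a proper quotient.
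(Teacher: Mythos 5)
Your proof is correct, but it takes a genuinely different route from the paper. The paper's argument is deliberately soft: it produces the trivially available infinite family of finite perfect groups $A_5$, $A_5\times A_5$, $A_5\times A_5\times A_5,\dots$ (pairwise distinct by order), and then invokes Hausmann's trick \cite{Hau76, Hau78}, which converts an arbitrary finitely presented perfect group into a balanced-presented perfect one; all the work is outsourced to that general machinery, and no explicit presentation is ever needed. You instead pick a single concrete classical family, $\mathrm{SL}(2,\mathbb{F}_p)$ for primes $p\ge 5$, for which deficiency-zero presentations are already known (Sunday; Campbell--Robertson), so balancedness comes from the cited two-generator, two-relator presentations rather than from a general construction; perfectness and pairwise non-isomorphism are handled correctly and cheaply (transvections are commutators for $p\ge 5$; the orders $p(p^2-1)$ are distinct). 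What each approach buys: the paper's version needs no knowledge of efficient presentations of specific groups but leans on Hausmann's surgery-theoretic result; yours trades that for a concrete, checkable family, with the nontrivial content concentrated in the classical deficiency-zero results you cite --- a legitimate trade, and you correctly flag that verifying such a presentation from scratch (showing no collapse to a proper quotient) is the genuinely delicate point. One small remark: your observation that $\mathrm{SL}(2,\mathbb{F}_p)$ is superperfect is a pleasant bonus but not extra generality, since \emph{any} balanced-presented perfect group automatically has $H_2=0$ (the deficiency bound $0=\mathrm{def}\le \mathrm{rank}\, H_1 - d(H_2)$ forces it), so the paper's groups enjoy the same property; the downstream proof of Theorem B in fact only uses balancedness and perfectness directly, via the regular neighborhood of the presentation complex.
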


\begin{proof}
By Hausmann's trick \cite{Hau76, Hau78}, which readily produces balanced perfect groups out of arbitrary finitely presented perfect groups, we only need to construct infinitely many perfect groups that are finitely-presented. But we can easily write down infinitely many perfect groups that are finite: Take $A_5$, $\; A_5 \times A_5$, $\; A_5 \times A_5  \times A_5$,  ... and so on. 
\end{proof}

\begin{proof}[\textbf{Proof of Theorem B}]
For any $d$-manifold $M$ and for any  $k\le \frac{d}{2}$, {  Murai and Nevo \cite{MuraiNevo} showed} 
 that $\partial M$ has $g_{k+1}=0$ if and only if  $M$ can be triangulated so that all $(d-k-1)$-faces are on $\partial M$. By Theorem~\ref{thm:handlebody}, this is in turn equivalent to $M$ admitting a decomposition into handles of index~$\le k$. Now set $d \ge 6$ and $k=2$. Our goal is construct infinitely many $d$-manifolds 
using only handles of index~$\le 2$. To this end, consider any balanced presentation $\wp(G)$ of a perfect group~$G$. Then the regular neighborhood $M_{\wp(G)}$ in $\RR^d$ of the presentation complex of $\wp(G)$  is built out of handles up to index $2$ only. By a classical general-position principle, the fundamental group of the boundary of $M_{\wp(G)}$  is $G$ itself. So since the abelianization of $G$ is trivial, the boundary of $M_{\wp(G)}$ is a homology sphere. Thus any two different perfect groups $G$ and $G'$ give rise to two manifolds $M_{\wp(G)}$ and $M_{\wp(G')}$ that are not homeomorphic, because their boundaries are homology spheres that are not  homotopy equivalent.  Via Lemma~\ref{lem:infinite}, we conclude.
\end{proof}

\vskip-4mm

\end{document}